\documentclass[10pt]{article}

\usepackage{amsfonts,amsmath,amsthm}
\usepackage{amssymb}
\usepackage{xcolor}
\usepackage{graphicx}
\usepackage{stmaryrd}        
\usepackage{multirow}
\usepackage{dsfont}
\usepackage{rotating}
\usepackage{algorithm}
\usepackage{algpseudocode}

\usepackage[paper=a4paper,dvips,top=2cm,left=2cm,right=2cm,
foot=1cm,bottom=4cm]{geometry}

\newcommand{\blue}[1]{#1}

\newtheorem{Thm}{Theorem}[section]
\newtheorem{Def}[Thm]{Definition}

\newtheorem{Prop}[Thm]{Proposition}

\newtheorem{example}[Thm]{Example}
\newtheorem{remark}[Thm]{Remark}

\begin{document}
	
\title{\blue{Optimization Models and Computational Bounds for Limited Augmented Zarankiewicz Numbers in the Incidence-Graph Family of Complete Graphs}}
	\author{Yi Xu\footnote{School of Mathematics, Southeast University, Nanjing 211189, China;  Jiangsu Provincial Scientific Research Center of Applied Mathematics, Nanjing 211189, China. ({\tt yi.xu1983@hotmail.com})}, \  Gaohang Yu\thanks{School of Science, Zhejiang University of Science and Technology.(E-mail:{\tt maghyu@163.com})}, \ Liqun Qi\thanks{Jiangsu Provincial Scientific Research Center of Applied Mathematics, Nanjing 211189, China; Department of Applied Mathematics, The Hong Kong Polytechnic University, Hung Hom, Kowloon, Hong Kong. ({\tt maqilq@polyu.edu.hk})}
	}
	
	\date{}
	\maketitle

\begin{abstract}
{Let $G_1$ denote the incidence graph of the complete graph $K_{q+1}$. We study limited augmented Zarankiewicz numbers in this family through structured 0--1 optimization models for admissible 2-edge augmentation. For the original framework, we combine exact ILP formulations for the smallest instances with constructive search followed by exact admissibility verification for larger instances. This yields}
\[
{z_L(6,4)=14,\qquad z_L(10,5)=26,\qquad z_L(15,6)\ge 43,\qquad z_L(21,7)\ge 64,\qquad z_L(28,8)\ge 88.}
\]
{The first two values are exact, whereas the latter three are rigorous lower bounds obtained from explicitly verified admissible families.}

{We then formulate the global weak problem determined by \((S),(W2),(W2'),(W3)\). On the larger incidence-family cases, the corresponding global exact optimization problem is currently computationally out of reach, and our computations provide the certified lower bounds}
\[
{z_{WL}(6,4)=14,\quad z_{WL}(10,5)\ge 29,\quad z_{WL}(15,6)\ge 46,\quad z_{WL}(21,7)\ge 65,\quad z_{WL}(28,8)\ge 90.}
\]
{On the fourteen literal-grid benchmarks from $(4,3)$ through $(7,7)$, by contrast, the global weak problem is solved exactly. Across this exact benchmark block, the weak framework is never worse than the strong framework and is strictly better in several cases. These computations improve the corresponding classical Zarankiewicz numbers and therefore strengthen available lower bounds for $\operatorname{BSR}(m,n)$ within this family.}

\medskip
\noindent\textbf{Keywords:} Augmented Zarankiewicz number, limited augmented Zarankiewicz number, weak framework, integer linear programming, combinatorial optimization, SOS rank, biquadratic form, \(C_4\)-free graph.\\

\noindent\textbf{AMS Subject Classifications:} 05C35, 05C75, 05C38, 05C42, 90C10.\\
\end{abstract}

\section{Introduction}

Let \(m \ge n \ge 2\). The \textbf{Zarankiewicz number} \(z(m,n)\) is the maximum number of edges in an \(m \times n\) bipartite graph that contains no complete bipartite subgraph \(K_{2,2}\) (equivalently, no cycle \(C_4\)). This classical extremal problem goes back to Zarankiewicz \cite{Za51}; a general upper bound was proved by K\"{o}v\'{a}ri, S\"{o}s and Tur\'{a}n \cite{KST54}. Also see \cite{CHM24}.

A basic extremal family is obtained from complete graphs. Let \(V(K_{q+1})=\{0,1,\dots,q\}\). The incidence graph of \(K_{q+1}\) has left vertex set \(\binom{V(K_{q+1})}{2}\) and right vertex set \(V(K_{q+1})\). It contains one edge for each incidence of a graph edge with one of its endpoints. This graph is \(C_4\)-free and has \(\binom{q+1}{2}\) left vertices, \(q+1\) right vertices, and \(q(q+1)\) edges. It is classical that
\[
z\!\left(\binom{q+1}{2},q+1\right)=q(q+1),
\]
see Reiman \cite{Re58} and Guy \cite{Gu69}. In particular,
\[
\begin{aligned}
z(6,4)&=12,\qquad z(10,5)=20,\qquad z(15,6)=30,\\
z(21,7)&=42,\qquad z(28,8)=56.
\end{aligned}
\]

Qi, Cui, and Xu \cite{QCX26a} introduced the \textbf{augmented Zarankiewicz number} \(z_A(m,n)\) and the \textbf{limited augmented Zarankiewicz number} \(z_L(m,n)\). Both parameters arise from the study of SOS ranks of biquadratic forms. The 1-edge subgraph is denoted by
\[
G_1=(S,T,E_1).
\]
For \(z_A(m,n)\), the graph \(G_1\) may be any extremal \(C_4\)-free bipartite graph. For \(z_L(m,n)\), one requires \(|E_1|=z(m,n)\). In both cases one allows the addition of 2-edges, subject to constraints forbidding generalized \(C_4\)-configurations.

{These augmentation parameters are motivated by the SOS rank of biquadratic forms. In brief, \(\operatorname{BSR}(m,n)\) \cite{QCX26a} is the maximum SOS rank among all \(m\times n\) sum-of-squares biquadratic forms. The key link proved in \cite{QCX26a} is that}
\[
{\operatorname{BSR}(m,n) \ge z_L(m,n) \ge z(m,n),}
\]
{so every lower bound for \(z_L(m,n)\) immediately yields a lower bound for \(\operatorname{BSR}(m,n)\). This is the optimization-theoretic reason for studying admissible 2-edge augmentations of extremal \(C_4\)-free graphs.}

{From this viewpoint, the strong framework fixes an extremal 1-edge graph \(G_1\) and seeks a largest compatible family \(E_2\) of 2-edges, while the weak framework later considered in the paper optimizes globally over both \(E_1\) and \(E_2\). Thus the strong framework provides the baseline model, and the weak framework provides a larger comparison class.}

A companion paper \cite{QCX26b} established a general lower bound by means of a \(K_{4t}\) block construction. If \(m=\binom{4t}{2}\) and \(n=4t\), then
\[
z_L(m,n)\ge 2\binom{4t}{2}+4t^2-2t.
\]
Consequently,
\[
\frac{z_L-z}{z}\ge \frac{4t^2}{16t^2-4t}\longrightarrow \frac14,
\]
so the gap is asymptotically at least \(25\%\) of the classical Zarankiewicz number.

{In the present paper we focus on the incidence-graph family arising from complete graphs, because it is structured enough to admit exact ILP models and still rich enough to exhibit a visible gap between the classical, strong, and weak parameters. We begin in Section~2 with a miniature \(q=2\) example that makes the row, column, cell, and 2-edge notation concrete before turning to the general theory.}

In this paper we focus on the incidence-graph family arising from complete graphs. This family is highly structured, the 1-edge subgraph is unique up to isomorphism, and the admissibility conditions admit finite optimization models with a strong built-in symmetry. This makes the family a natural testbed for combining exact 0--1 formulations, exact solution of small benchmark instances, and certified lower-bound procedures for larger instances. Our goal is to determine or bound
\[
z_L\!\left(\binom{q+1}{2},q+1\right)
\]
and its weak counterpart within this family for small values of \(q\), and to compare the original and weak frameworks on common exact and incidence-family benchmark sets.

\subsection*{Main contributions}

The main contributions are as follows.
\begin{enumerate}
  \item We formulate the original limited augmented Zarankiewicz problem in the incidence-graph family as a structured 0--1 optimization model with occupancy variables and explicit linear encodings of \((S)\), \((C2)\), and \((C3)\), with the nondegeneracy restriction \((C1)\) enforced at the candidate-family level.
  \item \blue{For the larger strong-framework instances, we complement the exact model by a constructive search procedure based on randomized greedy insertion, {local delete-and-refill improvement}, and final exact admissibility verification, thereby producing certified feasible solutions and rigorous lower bounds.}
  \item We obtain the exact values
  \[
  z_L(6,4)=14,\qquad z_L(10,5)=26.
  \]
  \item We obtain the lower bounds
  \[
  z_L(15,6)\ge 43,\qquad z_L(21,7)\ge 64,\qquad z_L(28,8)\ge 88.
  \]
  \blue{These bounds come from directly verified admissible families with \(|E_2|=13\), \(|E_2|=22\), and \(|E_2|=32\), respectively.}
  \item \blue{We formulate the global weak problem determined by \((S),(W2),(W2'),(W3)\) and develop the corresponding exact and heuristic computational procedures.}
  \item \blue{Within the incidence-graph family, the global exact weak problem is tractable at \((6,4)\), while for \((10,5)\), \((15,6)\), \((21,7)\), and \((28,8)\) the present computations provide the rigorous lower bounds \(z_{WL}(6,4)=14\), \(z_{WL}(10,5)\ge 29\), \(z_{WL}(15,6)\ge 46\), \(z_{WL}(21,7)\ge 65\), and \(z_{WL}(28,8)\ge 90\).}
  \item \blue{On the fourteen exact literal-grid benchmarks from \((4,3)\) through \((7,7)\), the global weak problem is solved exactly, and the weak framework is never worse than the strong one and is strictly better in several cases.}
  \item We compare the resulting values and bounds with the \(K_{4t}\) construction and observe that the gap ratios in the computed small cases already exceed the asymptotic \(25\%\) benchmark.
\end{enumerate}

\subsection*{Outline}

{Section 2 begins with a small \(q=2\) illustration and then introduces the general incidence-graph notation, the degenerate/nondegenerate distinction, and the admissibility rules.} \blue{Section 3 develops the strong-framework optimization model and the associated computational procedure, including candidate-family generation, direct ILP, and constructive search.} {Section 4 then reports the resulting exact values and certified lower bounds in that baseline setting.} \blue{Section 5 develops the weak-framework optimization models and computational procedures, presenting the global definition of \(z_{WL}\), the exact global literal-grid benchmarks, and the current incidence-family lower bounds for the larger cases.} {Section 6 compares the strong and weak outcomes on common benchmark sets, so that the effect of relaxing the admissibility conditions can be read directly from the tables.} \blue{Section 7 records a lifting comparison for \((21,7)\).} Section 8 compares the resulting ratios with the \(K_{4t}\) construction, and Section 9 concludes with a brief list of open problems.

\section{Incidence-graph model and admissibility structure for \(K_{q+1}\)}

Fix an integer \(q\ge 2\). Let
\[
V=\{0,1,\dots,q\},\qquad S=\binom{V}{2},\qquad T=V.
\]
Define
\[
E_1=\{(e,v)\in S\times T: v\in e\}.
\]
Then
\[
G_1=(S,T,E_1)
\]
is the incidence graph of \(K_{q+1}\). It has
\[
\begin{aligned}
|S|&=\binom{q+1}{2},\qquad |T|=q+1,\\
|E_1|&=2\binom{q+1}{2}=q(q+1)=z\!\left(\binom{q+1}{2},q+1\right).
\end{aligned}
\]

\begin{example}
{The smallest nontrivial incidence-family case is \(q=2\). Then \(V=\{0,1,2\}\),}
\[
{S=\{01,02,12\},\qquad T=\{0,1,2\},}
\]
{and the incidence 1-edge set is}
\[
{E_1=\{(01,0),(01,1),(02,0),(02,2),(12,1),(12,2)\}.}
\]
{Thus each row in \(S\) has exactly one available cell, namely}
\[
{(01,2),\qquad (02,1),\qquad (12,0).}
\]
{This tiny case is useful only as a notation guide: it shows concretely what we mean by rows, columns, occupied cells, and available cells, and it also illustrates that a 2-edge is simply an unordered pair of available cells. The first genuinely nontrivial optimization phenomena occur for larger values of \(q\), but this \(q=2\) picture serves as a convenient model for the general definitions below.}
\end{example}

\blue{The following proposition shows that, for these parameters, the extremal \(C_4\)-free bipartite graph is unique up to isomorphism. Consequently, the choice of the 1-edge subgraph is unambiguous in the incidence-graph family under study.}

\begin{Prop}
\blue{Let \(n=q+1\) and \(m=\binom{n}{2}\). If \(H=(X,Y,F)\) is a \(C_4\)-free bipartite graph with}
\[
\blue{|X|=m,\qquad |Y|=n,\qquad |F|=n(n-1),}
\]
\blue{then \(H\) is isomorphic to the incidence graph of \(K_n\).}
\end{Prop}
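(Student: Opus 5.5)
We argue by double counting the common neighbours of pairs of vertices of \(Y\). For \(x\in X\) write \(d_x=\deg_H(x)\). Since \(H\) is \(C_4\)-free, every pair of distinct vertices of \(Y\) has at most one common neighbour in \(X\). Counting pairs \((x,\{y,y'\})\) with \(y,y'\in N(x)\) therefore gives
\[
\sum_{x\in X}\binom{d_x}{2}\le \binom{n}{2}=m .
\]
The idea is to show that the edge count \(n(n-1)=2m\) forces every \(d_x\) to equal \(2\), and that equality then holds throughout. The incidence structure is read off from these equalities.

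\textbf{Degrees on \(X\).} We have \(\sum_x d_x=2m\) with \(|X|=m\), so the average degree is exactly \(2\). The function \(d\mapsto\binom{d}{2}\) is convex on the integers. Moreover \(\binom{d}{2}\ge d-1\) for every integer \(d\ge 0\), with equality iff \(d\in\{1,2\}\); at \(d=0\) we have \(0\ge -1\). Summing over \(x\) gives
\[
\sum_x\binom{d_x}{2}\ge \sum_x (d_x-1)=2m-m=m .
\]
Combined with the upper bound, equality must hold in both inequalities. So every \(d_x\in\{1,2\}\), and since the average is \(2\), every \(d_x=2\). Consequently each \(x\in X\) determines a \(2\)-subset \(N(x)\subseteq Y\).

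\textbf{Building the isomorphism.} Equality in the first inequality says that every pair \(\{y,y'\}\) of \(Y\) has exactly one common neighbour. Distinct vertices \(x\ne x'\) cannot have \(N(x)=N(x')\), since that pair would have two common neighbours and give a \(C_4\). So \(x\mapsto N(x)\) is an injection from \(X\) into \(\binom{Y}{2}\). Both sets have size \(m\), so it is a bijection. Fix any bijection \(\phi:Y\to V=\{0,\dots,n-1\}\) and send \(x\mapsto \phi(N(x))\in\binom{V}{2}\). Then \((x,y)\in F\) iff \(y\in N(x)\) iff \(\phi(y)\in\phi(N(x))\), which is exactly the incidence relation defining \(E_1\). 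Hence \(H\) is isomorphic to the incidence graph of \(K_n\).

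The only delicate point is the degree step. One has to handle possible degree-\(0\) vertices and degrees \(\ge 3\); the inequality \(\binom{d}{2}\ge d-1\) does both at once. The rest is bookkeeping.
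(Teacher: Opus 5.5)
Your proof is correct and follows essentially the same route as the paper: double counting to get $\sum_x\binom{d_x}{2}\le m$, the bound $\binom{d}{2}\ge d-1$ forcing equality so that every $d_x=2$, and then the injection $x\mapsto N(x)$ into $\binom{Y}{2}$, which is a bijection by cardinality. The convexity remark is never used, and your explicit relabelling $\phi$ just spells out the final identification with the incidence graph that the paper states directly.
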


\begin{proof}
\blue{For each \(x\in X\), let \(d(x)\) denote its degree. Because \(H\) is \(C_4\)-free, any two vertices of \(Y\) have at most one common neighbor in \(X\). Hence}
\[
\blue{\sum_{x\in X}\binom{d(x)}{2}\le \binom{n}{2}=m.}
\]
\blue{On the other hand, for every integer \(d\ge 0\),}
\[
\blue{\binom{d}{2}\ge d-1,}
\]
\blue{with equality if and only if \(d\in\{1,2\}\). Therefore}
\[
\blue{m\ge \sum_{x\in X}\binom{d(x)}{2}\ge \sum_{x\in X}(d(x)-1)=\sum_{x\in X}d(x)-|X|.}
\]
\blue{Now \(\sum_{x\in X}d(x)=|F|=n(n-1)\), while \(|X|=m=\binom{n}{2}=n(n-1)/2\). Thus}
\[
\blue{\sum_{x\in X}d(x)-|X|=n(n-1)-\binom{n}{2}=m.}
\]
\blue{So equality holds throughout. In particular, \(\binom{d(x)}{2}=d(x)-1\) for every \(x\in X\), hence \(d(x)\in\{1,2\}\) for all \(x\). Since the average degree on \(X\) is}
\[
\blue{\frac{|F|}{|X|}=\frac{n(n-1)}{\binom{n}{2}}=2,}
\]
\blue{every vertex of \(X\) must actually have degree exactly \(2\).}

\blue{Each vertex \(x\in X\) therefore determines an unordered pair of neighbors in \(Y\). Two distinct vertices of \(X\) cannot determine the same pair, for otherwise those two vertices together with those two vertices of \(Y\) would form a \(C_4\). Hence the map \(x\mapsto N(x)\) is an injection from \(X\) into \(\binom{Y}{2}\). But both sets have size \(m=\binom{n}{2}\), so this injection is a bijection. Thus every two-element subset of \(Y\) occurs exactly once as the neighborhood of a vertex of \(X\). This is precisely the incidence graph of \(K_n\).}
\end{proof}

An \textbf{augmented bipartite graph} has the form
\[
G=(S,T,E_1\cup E_2),
\]
where \(E_2\) is a set of \textbf{2-edges}. A 2-edge is written as
\[
(i_1i_2,i_3; i_4i_5,i_6),
\]
where \(\{i_1,i_2\},\{i_4,i_5\}\in S\) and \(i_3,i_6\in T\). Its two halves are the cells
\[
(\{i_1,i_2\},i_3),\qquad (\{i_4,i_5\},i_6).
\]
Following \cite{QCX26a}, a 2-edge is called \textbf{nondegenerate} if
\[
\{i_1,i_2\}\neq \{i_4,i_5\},\qquad i_3\neq i_6.
\]
It is called \textbf{row-degenerate} if \(\{i_1,i_2\}=\{i_4,i_5\}\) and \(i_3\neq i_6\), and \textbf{column-degenerate} if \(\{i_1,i_2\}\neq \{i_4,i_5\}\) and \(i_3=i_6\). Both degenerate and nondegenerate 2-edges are allowed in the general definition of \(z_L\). The case \(\{i_1,i_2\}=\{i_4,i_5\}\) and \(i_3=i_6\) is excluded by the simplicity condition \((S)\).

\subsection*{Admissibility conditions}

We use the following three conditions.
\begin{enumerate}
  \item[(S)] No 2-edge shares a cell with a 1-edge or with another 2-edge.
  \item[(C2)] For a \textbf{nondegenerate} 2-edge \((i_1i_2,i_3; i_4i_5,i_6)\), the two opposite cells
  \[
  (\{i_1,i_2\},i_6),\qquad (\{i_4,i_5\},i_3)
  \]
  cannot both be occupied.
  \item[(C3)] There do not exist a 2-edge \((i_1i_2,i_3; i_4i_5,i_6)\) and an occupied cell \((x,y)\), with
  \[
  x\notin\bigl\{\{i_1,i_2\},\{i_4,i_5\}\bigr\},\qquad y\notin\{i_3,i_6\},
  \]
  such that the five cells
  \[
  (x,y),\ (x,i_3),\ (x,i_6),\ (\{i_1,i_2\},y),\ (\{i_4,i_5\},y)
  \]
  are all occupied. If the 2-edge is nondegenerate, then these five cells are additionally required to be pairwise distinct.
\end{enumerate}

\begin{Def}
For \(m=\binom{q+1}{2}\) and \(n=q+1\), consider the incidence-graph family associated with \(K_{q+1}\). The limited augmented Zarankiewicz number in this family is the maximum value of
\[
|E_1|+|E_2|
\]
over all augmented bipartite graphs \(G=(S,T,E_1\cup E_2)\) satisfying \((S)\), \((C2)\), and \((C3)\), with \(G_1\) fixed as the incidence graph of \(K_{q+1}\).
\end{Def}

By \cite{QCX26a}, every admissible construction yields a lower bound for \(\operatorname{BSR}(m,n)\):
\[
\operatorname{BSR}(m,n)\ge z_L(m,n)\ge z(m,n).
\]

{With the incidence graph, the available cells, and the admissibility conditions now fixed, we can turn to the first optimization layer: the strong framework, where the 1-edge graph is fixed and the task is to maximize the size of an admissible 2-edge family.}

\section{The strong framework: optimization model and computational procedure}

\blue{We first treat the original condition system from \cite{QCX26a}. In the present paper, condition \((C1)\) is enforced by restricting attention to nondegenerate 2-edges in the benchmark and constructive computations discussed below; the active feasibility conditions are then \((S),(C2)\), and \((C3)\).} {This part serves as the baseline for the whole paper: we first optimize in the more restrictive original framework, and only afterwards pass to the weaker global framework in Section~5.} \blue{For \(q=3\) and \(q=4\), we solve the problem exactly on the full candidate family by means of a direct 0--1 ILP whose variables record both the selected 2-edges and the occupied available cells. For \(q=5\), \(q=6\), and \(q=7\), we report explicit admissible families whose validity has been checked directly against \((S)\), \((C2)\), and \((C3)\). Throughout this section, the terms degenerate and nondegenerate are understood in the sense of \cite{QCX26a}: a 2-edge is nondegenerate precisely when its two halves lie in different rows and different columns.}

\subsection*{Available cells and candidate families}

For each row \(r\in S=\binom{V}{2}\) and each column \(c\in T=V\), the cell \((r,c)\) is already occupied by a 1-edge if and only if \(c\in r\). Hence the set of available cells is
\[
A_q:=\{(r,c)\in S\times T: c\notin r\}.
\]
Since every row \(r\in S\) excludes exactly two columns, we have
\[
|A_q|=\binom{q+1}{2}(q-1).
\]
A 2-edge is determined by an unordered pair of distinct available cells. Accordingly, the full candidate family is
\[
\mathcal E_q:=\bigl\{\{a,b\}: a,b\in A_q,\ a\neq b\bigr\}.
\]
For a candidate \(e=\{a,b\}\in\mathcal E_q\), write
\[
a=(r_1,c_1),\qquad b=(r_2,c_2).
\]
Then \(e\) is nondegenerate if and only if \(r_1\neq r_2\) and \(c_1\neq c_2\). It is row-degenerate if \(r_1=r_2\) and \(c_1\neq c_2\), and column-degenerate if \(r_1\neq r_2\) and \(c_1=c_2\).

\blue{For \(q=3\) and \(q=4\), we use the full candidate family in the exact ILP. For \(q=5\), the full candidate family has size}
\[
|\mathcal E_5|=\binom{60}{2}=1770,
\]
\blue{while its nondegenerate subfamily has size}
\[
|\mathcal E_5^{\mathrm{nd}}|=1410.
\]
\blue{For \(q=6\) and \(q=7\), we do not enumerate the full candidate families in this paper; instead we present directly verified admissible examples produced by the constructive search described below.}

\subsection*{Direct 0--1 ILP formulation}

Fix a candidate family \(\mathcal E\subseteq \mathcal E_q\). We now write the exact optimization model in a standard form that can be implemented directly. This is a \textbf{binary linear integer program} \cite{Wol98,NW99}, a class of problems known to be NP-hard in general \cite{GJ79}, but tractable at the scales considered here using standard MILP solvers.

\paragraph{Index sets and auxiliary notation.}
For each candidate \(e\in\mathcal E\), write
\[
e=\{(r_1(e),c_1(e)),(r_2(e),c_2(e))\},
\]
where \((r_1(e),c_1(e))\) and \((r_2(e),c_2(e))\) are its two halves. Let
\[
\mathcal N:=\{e\in\mathcal E: r_1(e)\neq r_2(e),\ c_1(e)\neq c_2(e)\}
\]
be the set of nondegenerate candidates.

For each available cell \(a\in A_q\) and each candidate \(e\in\mathcal E\), define the incidence coefficient
\[
M_{a,e}:=
\begin{cases}
1, & \text{if }a\text{ is one of the two halves of }e,\\
0, & \text{otherwise.}
\end{cases}
\]
For every cell \((r,c)\in S\times T\), define the occupancy symbol
\[
\bar o_{r,c}:=
\begin{cases}
1, & c\in r,\\
o_{(r,c)}, & (r,c)\in A_q.
\end{cases}
\]
Thus \(\bar o_{r,c}\) is a constant on 1-edge cells and a binary variable on available cells.

For each candidate \(e\in\mathcal E\), define the admissible witness set
\[
\mathcal W(e):=\Bigl\{(x,y): x\in S\setminus\{r_1(e),r_2(e)\},\ y\in T\setminus\{c_1(e),c_2(e)\}\Bigr\},
\]
with the additional restriction that, when \(e\in\mathcal N\), the five cells
\[
(x,y),\ (x,c_1(e)),\ (x,c_2(e)),\ (r_1(e),y),\ (r_2(e),y)
\]
must be pairwise distinct.

\paragraph{Decision variables.}
For each candidate \(e\in\mathcal E\), let
\[
x_e\in\{0,1\}
\]
indicate whether \(e\) is selected. For each available cell \(a\in A_q\), let
\[
o_a\in\{0,1\}
\]
indicate whether \(a\) is occupied by a selected 2-edge.

\paragraph{Optimization model.}
We solve
\[
\min -\sum_{e\in\mathcal E} x_e
\]
subject to
\begin{equation}
o_a-\sum_{e\in\mathcal E} M_{a,e}x_e = 0,
\qquad a\in A_q,
\tag{ILP-S}
\end{equation}
\begin{equation}
x_e+\bar o_{r_1(e),c_2(e)}+\bar o_{r_2(e),c_1(e)} \le 2,
\qquad e\in\mathcal N,
\tag{ILP-C2}
\end{equation}
and
\begin{equation}
x_e+\bar o_{x,y}+\bar o_{x,c_1(e)}+\bar o_{x,c_2(e)}+\bar o_{r_1(e),y}+\bar o_{r_2(e),y} \le 5,
\qquad e\in\mathcal E,\ (x,y)\in\mathcal W(e),
\tag{ILP-C3}
\end{equation}
together with
\[
x_e\in\{0,1\}\quad (e\in\mathcal E),
\qquad
 o_a\in\{0,1\}\quad (a\in A_q).
\]

This formulation is a \textbf{binary linear integer program}. Indeed, the objective function is linear in the variables \(x_e\). Constraint \((\mathrm{ILP}\text{-}\mathrm{S})\) is linear because each equation involves only a binary occupancy variable \(o_a\) and a linear sum of candidate-selection variables \(x_e\). Constraint \((\mathrm{ILP}\text{-}\mathrm{C2})\) is also linear, because for every cell \((r,c)\in S\times T\) the symbol \(\bar o_{r,c}\) is not a product term: it equals the constant \(1\) when \((r,c)\in E_1\), and it equals the binary variable \(o_{(r,c)}\) when \((r,c)\in A_q\). The same observation shows that \((\mathrm{ILP}\text{-}\mathrm{C3})\) is linear as well. Therefore the entire model can be implemented directly in standard MILP solvers without any linearization step.

Constraint \((\mathrm{ILP}\text{-}\mathrm{S})\) is exactly the simplicity condition \((S)\): each available cell can be used by at most one selected 2-edge, and the variable \(o_a\) records whether that cell is used. Constraint \((\mathrm{ILP}\text{-}\mathrm{C2})\) is the direct linear encoding of condition \((C2)\) for nondegenerate 2-edges. Constraint \((\mathrm{ILP}\text{-}\mathrm{C3})\) is the direct linear encoding of condition \((C3)\), with the distinctness requirement imposed through the definition of \(\mathcal W(e)\) whenever \(e\) is nondegenerate.

If the optimal objective value of the above minimization model is \(\zeta(\mathcal E)\), then the maximum number of selected 2-edges in the chosen candidate family is
\[
|E_2|_{\max}(\mathcal E)=-\zeta(\mathcal E).
\]
Hence, when \(\mathcal E=\mathcal E_q\) is the full candidate family,
\[
z_L\!\left(\binom{q+1}{2},q+1\right)=q(q+1)-\zeta(\mathcal E_q).
\]
If \(\mathcal E\) is only a restricted subfamily, then every feasible solution yields the lower bound
\[
z_L\!\left(\binom{q+1}{2},q+1\right)\ge q(q+1)+\sum_{e\in\mathcal E}x_e.
\]
\blue{This is exactly the model implemented, in equivalent programmatic form, by the checked Python and MATLAB codes.}

\subsection*{\blue{Constructive search algorithm for lower bounds}}

\blue{For \(q=5,6,7\), we use the following constructive routine to produce large admissible families efficiently. The search step is heuristic, whereas the final admissibility test is exact; consequently, every accepted output yields a rigorous lower bound.}

\begin{algorithm}[htbp]
\caption{\blue{Constructive lower-bound search with exact final verification}}
\label{alg:constructive-lb}
\begin{algorithmic}[1]
\Require \blue{An integer \(q\), a candidate family \(\mathcal E_q\) (full or nondegenerate), a time limit, and a number of randomized restarts}
\Ensure \blue{A verified admissible family \(E_2\), hence a lower bound \(z_L\ge q(q+1)+|E_2|\)}
\State \blue{Initialize \(E_2^{\mathrm{best}}\gets \emptyset\)}
\For{\blue{each randomized restart}}
  \State \blue{Generate a randomized ordering of candidates in \(\mathcal E_q\)}
  \State \blue{Initialize \(E_2\gets \emptyset\) and the occupied-cell set induced by \(E_1\)}
  \For{\blue{each candidate 2-edge \(e\) in the chosen order}}
    \If{\blue{adding \(e\) preserves \((S)\), \((C2)\), and \((C3)\)}}
      \State \blue{Insert \(e\) into \(E_2\)}
      \State \blue{Update the occupied-cell set}
    \EndIf
  \EndFor
  \State {Apply local improvement: delete one or two members of \(E_2\), refill greedily in the current candidate order, and accept the modification only if it strictly increases \(|E_2|\)}
  \If{\blue{\(\lvert E_2 \rvert > \lvert E_2^{\mathrm{best}} \rvert\)}}
    \State \blue{Set \(E_2^{\mathrm{best}}\gets E_2\)}
  \EndIf
\EndFor
\State \blue{Verify \(E_2^{\mathrm{best}}\) directly against \((S)\), \((C2)\), and \((C3)\)}
\State \blue{Output \(E_2^{\mathrm{best}}\) and the lower bound \(q(q+1)+|E_2^{\mathrm{best}}|\)}
\end{algorithmic}
\end{algorithm}

\blue{This is the procedure used for the lower-bound examples reported in Section 4.} {More precisely, each restart begins from a fresh randomized ordering of candidates; the greedy phase scans that order once and inserts every feasible candidate; the local-improvement phase then tests one-edge and two-edge deletions and refills greedily from the same order. A modified family is accepted only when its cardinality is strictly larger, and the process stops when no such improving deletion-refill move is available.} \blue{In particular, the displayed families for \(q=5,6,7\) are not recorded merely because they were found by a heuristic search; they are recorded only after an exact final admissibility verification.}

\section{Strong-framework computational results}

\blue{We now summarize the strong-framework computational results, distinguishing exact values from certified lower bounds obtained by verified feasible families.}

\begin{Thm}
In the incidence-graph family of complete graphs,
\[
z_L(6,4)=14,\qquad z_L(10,5)=26,\qquad z_L(15,6)\ge 43,
\]
and
\[
z_L(21,7)\ge 64,\qquad z_L(28,8)\ge 88.
\]
\end{Thm}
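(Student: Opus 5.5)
The theorem splits into two kinds of claims: two exact values ($q=3,4$) and three lower bounds ($q=5,6,7$). By the Proposition, $G_1$ is forced to be the incidence graph of $K_{q+1}$. So $z_L(\binom{q+1}{2},q+1)=q(q+1)+|E_2|_{\max}$, and every claim reduces to a statement about $|E_2|$: namely $|E_2|_{\max}=2$ for $q=3$ and $|E_2|_{\max}=6$ for $q=4$, together with the existence of admissible families of sizes $13$, $22$ and $32$ for $q=5,6,7$.

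\textbf{Lower bounds.} For each $q\in\{3,4,5,6,7\}$ I would exhibit an explicit family $E_2\subseteq\mathcal E_q$, found by Algorithm~\ref{alg:constructive-lb} or by the ILP. I would then verify admissibility exactly, checking each condition in turn.
\begin{itemize}
\item[(S)] The $2|E_2|$ halves lie in $A_q$ and are pairwise distinct.
\item[(C2)] For each nondegenerate $e$, the two opposite cells are not both occupied. This uses the rule that $(r,c)$ is occupied iff $c\in r$ or $(r,c)$ is a half of some member of $E_2$.
\item[(C3)] For each $e\in E_2$ and each $(x,y)\in\mathcal W(e)$, the five witness cells are not all occupied.
\end{itemize}
These are finite checks of size $O(|E_2|\cdot|S|\cdot|T|)$, so they are routine. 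This yields $z_L\ge 14,26,43,64,88$ respectively.

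Some structure should cut down the (C3) check. A 2-edge cell $(r,c)$ with $c\notin r$ reveals the pattern. If the witness row $x$ has $i_3,i_6\in x$ as 1-edges, then $x=\{i_3,i_6\}$, and the row-cells $(x,y)$ and $(r_j,y)$ are occupied by 1-edges only if $y$ meets the relevant pairs. I would use this to organize the verification and to explain why $C_4$-type conflicts are sparse. The certificate itself, however, is just the explicit list.

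\textbf{Upper bounds for $q=3,4$.} I would solve the ILP of Section~3 exactly on the full candidate family $\mathcal E_q$. For $q=3$, $|A_3|=12$, so $|\mathcal E_3|=66$. For $q=4$, $|A_4|=30$, so $|\mathcal E_4|=435$. The solver certifies $\zeta(\mathcal E_3)=-2$ and $\zeta(\mathcal E_4)=-6$; optimality is proved by branch-and-bound closing the dual gap. Correctness of the model was argued in Section~3: (ILP-S), (ILP-C2) and (ILP-C3) encode (S), (C2) and (C3) exactly.

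For $q=3$ I would also want a human-readable argument, which should be manageable. Each row has exactly two available cells, and there are 6 rows and 4 columns. I would show that any three 2-edges occupy 6 of the 12 available cells and force either a (C2) opposite-cell violation or a (C3) five-cell configuration with a 1-edge witness. The case analysis would use the $S_4$ symmetry of $K_4$ to reduce to a few orbit types of pairs of available cells.

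\textbf{Main obstacle.} The hard part is the exact upper bound $|E_2|\le 6$ at $q=4$. A hand proof there seems impractical, so the argument rests on the exhaustive ILP certificate. I would first strengthen it by symmetry breaking under $S_5$, fixing one 2-edge per orbit to confirm infeasibility of $|E_2|\ge7$. I would then cross-check the result with an independent implementation, Python versus MATLAB.
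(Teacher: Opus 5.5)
Your proposal is correct and follows the same route as the paper. The lower bounds are certified by explicit families checked directly against (S), (C2), (C3); the paper displays such families of sizes $2,6,13,22,32$. The exact values at $q=3,4$ come from solving the Section~3 ILP to optimality on the full candidate families of sizes $66$ and $435$. Your planned hand argument at $q=3$ and the $S_5$ symmetry-breaking cross-check at $q=4$ are extra confirmation of that same computational certificate, not a different route.
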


\begin{table}[htbp]
\centering
\caption{Strong-framework computational results in the incidence-graph family: exact values and certified lower bounds}
\label{tab:smallq}
\begin{tabular}{|c|c|c|c|c|}
\hline
\(q\) & Parameters & Source of the result & Result for \(|E_2|\) & Consequence for \(z_L\) \\ \hline
3 & \((6,4)\) & full candidate family, exact direct ILP & \(2\) & \(14\) \\
4 & \((10,5)\) & full candidate family, exact direct ILP & \(6\) & \(26\) \\
5 & \((15,6)\) & constructive search + direct verification & \(\ge 13\) & \(\ge 43\) \\
6 & \((21,7)\) & constructive search + direct verification & \(\ge 22\) & \(\ge 64\) \\
7 & \((28,8)\) & constructive search + direct verification & \(\ge 32\) & \(\ge 88\) \\ \hline
\end{tabular}
\end{table}

For \(q=3\), the full candidate family has size \(66\), and the exact ILP optimum is \(|E_2|=2\). Hence
\[
z_L(6,4)=12+2=14.
\]
\blue{One admissible family attaining this value is}
\[
\blue{E_2=\{(01,2;03,1),\ (13,2;23,0)\}.}
\]

For \(q=4\), the full candidate family has size \(435\), and the exact ILP optimum is \(|E_2|=6\). Hence
\[
z_L(10,5)=20+6=26.
\]
\blue{One admissible family attaining this value is}
\[
\blue{\begin{aligned}
E_2=\{&(01,4;24,3),\ (02,4;13,2),\ (03,1;12,4),\\
&(03,2;04,1),\ (12,3;34,0),\ (14,0;23,1)\}.
\end{aligned}}
\]

\begin{remark}[\blue{Auxiliary exact literal-grid benchmarks}]
\blue{Besides the incidence-graph family, we also carried out exact literal-grid computations for the small cases \((6,3)\), \((6,4)\), \((6,5)\), \((6,6)\), and \((5,5)\), under the same nondegeneracy, simplicity \((S)\), \((C2)\), and \((C3)\) rules. Here \(E_1\subseteq [m]\times[n]\) is an extremal \(C_4\)-free bipartite graph with \(|E_1|=z(m,n)\), and we then maximize \(|E_2|\) exactly over the available cells. The case \((6,4)\) coincides with the incidence-graph family for \(q=3\); the other four cases are recorded only as auxiliary directly checkable benchmarks.}

\blue{For \((6,3)\), an extremal \(C_4\)-free graph is}
\[
\blue{E_1=\{(1,1),(2,1),(3,1),(4,1),(4,2),(5,1),(5,3),(6,2),(6,3)\},}
\]
\blue{with \(|E_1|=9=z(6,3)\). Exact computation gives \(\max |E_2|=2\), attained for example by}
\[
\blue{E_2=\{((1,3),(2,2)),\ ((2,3),(3,2))\}.}
\]
\blue{Hence \(z_L(6,3)=11\).}

\blue{For \((6,4)\), an extremal \(C_4\)-free graph is}
\[
\blue{E_1=\{(1,1),(1,2),(2,1),(2,3),(3,2),(3,3),(4,1),(4,4),(5,2),(5,4),(6,3),(6,4)\},}
\]
\blue{with \(|E_1|=12=z(6,4)\). Exact computation gives \(\max |E_2|=2\), attained for example by}
\[
\blue{E_2=\{((3,1),(5,3)),\ ((4,2),(6,1))\}.}
\]
\blue{Hence \(z_L(6,4)=14\), consistent with the exact incidence-graph result above.}

\blue{For \((6,5)\), an extremal \(C_4\)-free graph is}
\[
\blue{E_1=\{(1,1),(1,2),(2,1),(2,3),(3,2),(3,4),(4,3),(4,4),(5,2),(5,3),(5,5),(6,1),(6,4),(6,5)\},}
\]
\blue{with \(|E_1|=14=z(6,5)\). Exact computation gives \(\max |E_2|=3\), attained for example by}
\[
\blue{E_2=\{((2,4),(3,5)),\ ((3,3),(5,1)),\ ((4,1),(6,2))\}.}
\]
\blue{Hence \(z_L(6,5)=17\).}

\blue{For \((6,6)\), an extremal \(C_4\)-free graph is}
\[
\blue{E_1=\{(1,1),(1,2),(2,3),(2,4),(3,1),(3,3),(3,5),(4,2),(4,4),(4,5),(5,2),(5,3),(5,6),(6,1),(6,4),(6,6)\},}
\]
\blue{with \(|E_1|=16=z(6,6)\). Exact computation gives \(\max |E_2|=4\), attained for example by}
\[
\blue{E_2=\{((1,6),(2,1)),\ ((2,5),(3,6)),\ ((3,4),(6,2)),\ ((4,3),(5,1))\}.}
\]
\blue{Hence \(z_L(6,6)=20\).}

\blue{For \((5,5)\), one extremal \(C_4\)-free graph is}
\[
\blue{E_1=\{(1,1),(1,2),(2,1),(2,3),(3,2),(3,4),(4,2),(4,3),(4,5),(5,1),(5,4),(5,5)\},}
\]
\blue{with \(|E_1|=12=z(5,5)\). Exact computation gives \(\max |E_2|=3\), attained for example by}
\[
\blue{E_2=\{((1,4),(5,3)),\ ((2,2),(4,4)),\ ((2,5),(3,1))\}.}
\]
\blue{Hence \(z_L(5,5)=15\).}

\end{remark}

For \(q=5\), we present a verified admissible family with \(|E_2|=13\). Therefore
\[
z_L(15,6)\ge 30+13=43.
\]
\blue{The family below was obtained by Algorithm~\ref{alg:constructive-lb} and then checked exactly against \((S)\), \((C2)\), and \((C3)\).}
\blue{One admissible family of size \(13\) is}
\[
\blue{\begin{aligned}
E_2=\{&(01,2;35,4),\ (01,3;45,2),\ (02,1;34,5),\ (02,3;14,5),\ (03,1;25,4),\\
&(04,3;15,2),\ (04,5;12,3),\ (05,3;24,1),\ (05,4;23,1),\ (13,0;24,5),\\
&(14,2;35,0),\ (15,4;23,0),\ (25,1;34,0)\}.
\end{aligned}}
\]
\blue{This family is nondegenerate in the sense of \cite{QCX26a}.}

\blue{For \(q=6\), a verified admissible family of size \(22\) gives}
\[
\blue{z_L(21,7)\ge 42+22=64.}
\]
\blue{The family below was obtained by Algorithm~\ref{alg:constructive-lb} and then checked exactly against \((S)\), \((C2)\), and \((C3)\).}
\blue{One such family is}
\[
\blue{\begin{aligned}
E_2=\{&(25,1;36,0),\ (04,5;16,3),\ (14,5;26,3),\ (12,5;46,0),\ (05,1;23,6),\\
&(26,4;35,1),\ (23,1;45,6),\ (03,2;15,4),\ (01,5;24,3),\ (05,4;16,2),\\
&(24,0;36,1),\ (02,4;16,5),\ (06,4;25,3),\ (06,5;13,2),\ (14,0;36,2),\\
&(34,0;56,1),\ (12,3;45,0),\ (03,4;15,6),\ (01,4;35,2),\ (34,2;56,0),\\
&(02,3;46,5),\ (13,5;26,0)\}.
\end{aligned}}
\]
\blue{This family is nondegenerate in the sense of \cite{QCX26a}.}

\blue{For \(q=7\), a verified admissible family of size \(32\) gives}
\[
\blue{z_L(28,8)\ge 56+32=88.}
\]
\blue{The family below was obtained by Algorithm~\ref{alg:constructive-lb} and then checked exactly against \((S)\), \((C2)\), and \((C3)\).}
\blue{One such family is}
\[
\blue{\begin{aligned}
E_2=\{&(24,1;36,5),\ (15,4;36,0),\ (14,5;36,7),\ (01,7;23,6),\ (13,5;24,6),\\
&(01,4;56,7),\ (23,4;67,5),\ (01,6;57,4),\ (16,3;25,4),\ (05,3;47,2),\\
&(06,3;12,7),\ (45,2;67,0),\ (04,6;35,2),\ (02,6;34,5),\ (13,7;26,4),\\
&(07,6;15,2),\ (04,2;35,7),\ (06,5;12,3),\ (27,5;46,1),\ (17,0;23,5),\\
&(03,2;17,5),\ (35,1;46,0),\ (26,0;37,1),\ (03,6;57,2),\ (16,7;45,3),\\
&(02,1;37,5),\ (12,4;56,3),\ (16,0;25,3),\ (27,6;34,1),\ (05,6;47,1),\\
&(07,2;15,3),\ (14,2;56,0)\}.
\end{aligned}}
\]
\blue{This family is nondegenerate in the sense of \cite{QCX26a}.}

\begin{remark}
Under the row/column-based definition of \cite{QCX26a}, the examples displayed above for \(q=3,4,5,6,7\) are all nondegenerate.
\end{remark}

\section{The weak framework: optimization models and computational procedures}

\blue{We next develop the weak framework, proposed in \cite{QCX26c}, from the viewpoint of global optimization and certified computation.} {Conceptually, this section relaxes the baseline strong model from Sections~3--4 and asks how much larger the admissible class becomes when the weaker rules \((S),(W2),(W2'),(W3)\) are imposed instead.} \blue{Formally, the weak admissibility rules are \((S),(W2),(W2'),(W3)\), and the associated parameter \(z_{WL}(m,n)\) is interpreted globally throughout the paper. On the literal-grid benchmarks studied later, this global weak problem is solved exactly for all fourteen cases from \((4,3)\) through \((7,7)\), allowing both nondegenerate and degenerate 2-edges as in \cite{QCX26c}. On the larger incidence-family cases, by contrast, the corresponding global exact optimization problem is presently too large to solve to optimality, so we compute rigorous lower bounds by means of tractable search-and-verification procedures. The purpose of this section is therefore to state the weak admissibility conditions and to describe the models and algorithms used to obtain the current exact values and certified lower bounds.}

\subsection*{The weak admissibility conditions}

\blue{Let \(G=(S,T,E_1\cup E_2)\) be an augmented bipartite graph with the same fixed 1-edge subgraph \(G_1=(S,T,E_1)\) as before. The weak framework used here consists of the following requirements.}
\begin{enumerate}
  \item[(S)] \blue{No 2-edge shares a cell with a 1-edge or with another 2-edge.}
  \item[(W2')] \blue{For a nondegenerate 2-edge, its two opposite cells are not both 1-edges. Equivalently, if \((r_1,c_1;r_2,c_2)\) is selected, then the two opposite cells \((r_1,c_2)\) and \((r_2,c_1)\) cannot both belong to \(E_1\).}
  \item[(W2)] \blue{The dependency relation among selected nondegenerate 2-edges contains no directed cycle, except that a directed 2-cycle formed by a pair of complementary 2-edges is allowed. Here a directed arc \(e\to e'\) is present only when both opposite cells of the selected source edge \(e\) are occupied in the augmented graph and one half of the selected target edge \(e'\) is one of those two opposite cells.}
  \item[(W3)] \blue{For any vertex-disjoint pair of edges in which at least one edge is a selected 2-edge, at least one corresponding opposite cell must remain unoccupied. For a vertex-disjoint 1-edge and 2-edge this gives the four opposite cells \(\{(r,j),(r,\ell),(i,c),(k,c)\}\) when the 1-edge is \((r,c)\) and the 2-edge is \((i,j;k,\ell)\). For two vertex-disjoint nondegenerate selected 2-edges \((i,j;k,\ell)\) and \((i',j';k',\ell')\), the opposite-cell set is the eight-cell set \(\{(i,j'),(i,\ell'),(k,j'),(k,\ell'),(i',j),(i',\ell),(k',j),(k',\ell)\}\), with repetitions suppressed, and at least one of these cells must remain unoccupied. This is the weak analogue of the forbidden generalized \(C_4\)-patterns encoded by \((C3)\).}
\end{enumerate}

\blue{Every family admissible in the strong framework is also admissible in the weak framework, but the converse need not hold. Consequently, the weak framework is suitable both for comparison experiments and for producing potentially larger lower bounds.}

\subsection*{Optimization model in the incidence-graph family}

\blue{Fix again the incidence graph of \(K_{q+1}\), with available-cell set \(A_q\) and nondegenerate candidate family \(\mathcal E_q^{\mathrm{nd}}\). For the larger incidence-family cases, we use this tractable candidate family to compute rigorous lower bounds for the global parameter \(z_{WL}\). The optimization model is again a binary ILP of the same type as in Section 3 \cite{Wol98,CCZ14}, with the constraints encoding the weak admissibility rules.}

\blue{For each candidate \(e\in \mathcal E_q^{\mathrm{nd}}\), let \(x_e\in\{0,1\}\) indicate whether \(e\) is selected, and for each available cell \(a\in A_q\) let \(o_a\in\{0,1\}\) indicate whether \(a\) is occupied by a selected 2-edge. As before, the simplicity condition is encoded by}
\[
o_a-\sum_{e\in\mathcal E_q^{\mathrm{nd}}} M_{a,e}x_e=0,
\qquad a\in A_q,
\]
\blue{together with \(o_a\in\{0,1\}\).}

\blue{Condition \((W2')\) is imposed only on the fixed 1-edge pattern: if both opposite cells of a candidate are already in \(E_1\), then that candidate is forbidden. Thus \((W2')\) is handled by deleting such candidates from the feasible family before optimization.}

\blue{To encode \((W2)\), we again introduce an integer order variable for each candidate. An order constraint is imposed only for an active arc \(e\to e'\) whose source edge \(e\) is selected, whose two opposite cells are both occupied in the augmented graph, and one half of the selected target edge \(e'\) lies in those opposite cells. For an ordinary realized dependency arc we impose a strict order increase, while for a realized arc inside a complementary pair we allow the two edges to lie on the same layer. Equivalently, the implemented verifier contracts each allowed complementary pair into a single component and then checks acyclicity of the resulting component digraph. Thus the \((W2)\) constraints are activated only for realized dependency arcs, not for every potential opposite-cell hit.}

\blue{Condition \((W3)\) is then written as a finite family of linear inequalities. For each selected 2-edge and each vertex-disjoint 1-edge, at least one of the four corresponding opposite cells must remain unoccupied. For each vertex-disjoint pair of selected nondegenerate 2-edges, the relevant opposite-cell set is the eight-cell set obtained by pairing each row of one edge with each column of the other edge, and symmetrically. Again at least one of these opposite cells must remain unoccupied. Since the occupied-state symbols are constants on \(E_1\) and binary variables on available cells, all these inequalities are linear. Consequently, this model is again solvable by a direct binary ILP on the chosen candidate family and provides certified lower bounds for the global weak parameter in the larger incidence-family cases.}

\subsection*{A companion search-and-verification algorithm}

\blue{For \(q=3\), this incidence-family model is small enough to solve exactly. For \(q=4\), the same direct ILP yields a verified feasible weak solution, although we do not claim an optimality certificate for the full global parameter \(z_{WL}(10,5)\). For \(q=5,6,7\), we use a randomized greedy search with {local delete-and-refill improvement}, followed by exact verification of the returned family against \((S),(W2),(W2'),(W3)\). The logic is the same as in Algorithm~\ref{alg:constructive-lb}: the search step is heuristic, but the final admissibility test is exact, so every accepted configuration yields a rigorous lower bound for the global weak parameter.}

\begin{algorithm}[htbp]
\caption{\blue{Constructive search for the weak framework}}
\label{alg:constructive-weak}
\begin{algorithmic}[1]
\Require \blue{An integer \(q\), the nondegenerate incidence-family candidate set \(\mathcal E_q^{\mathrm{nd}}\), a time limit, and a number of randomized restarts}
\Ensure \blue{A verified weakly admissible family \(E_2\)}
\State \blue{Delete every candidate forbidden by \((W2')\)}
\State \blue{Initialize \(E_2^{\mathrm{best}}\gets \emptyset\)}
\For{\blue{each randomized restart}}
  \State {Generate a randomized candidate order by sorting candidates according to a weak-conflict score, with smaller scores preferred and random perturbations used to break ties}
  \State \blue{Initialize \(E_2\gets\emptyset\)}
  \For{\blue{each candidate \(e\) in the chosen order}}
    \If{\blue{adding \(e\) preserves \((S),(W2),(W2'),(W3)\)}}
      \State \blue{Insert \(e\) into \(E_2\)}
    \EndIf
  \EndFor
  \State {Apply local delete-and-refill improvement: remove one selected candidate, refill greedily in the current order, and accept the new family only if it is larger}
  \If{\blue{\(\lvert E_2 \rvert > \lvert E_2^{\mathrm{best}} \rvert\)}}
    \State \blue{Set \(E_2^{\mathrm{best}}\gets E_2\)}
  \EndIf
\EndFor
\State \blue{Verify \(E_2^{\mathrm{best}}\) exactly against \((S),(W2),(W2'),(W3)\)}
\State \blue{Output the verified family and the corresponding weak-framework objective value}
\end{algorithmic}
\end{algorithm}

{In the implementation, the weak-conflict score used in the ordering step is the number of precomputed \(W3\)-type obstructions associated with the candidate, together with a small random perturbation and a mild geometric tie-break term depending on the row and column separations of its two halves. The local-improvement step deletes one currently selected edge, reruns the same greedy scan on the remaining family, and accepts the result only when it produces a strictly larger feasible family. Thus the heuristic component is fully specified, while the final admissibility check remains exact.}

\subsection*{Weak-framework computational results}

\blue{We now record the weak-framework values and verified lower bounds. In the sense of the weak limited augmented Zarankiewicz number, \(z_{WL}(m,n)\) is the maximum of \(|E_1|+|E_2|\) over all pairs \((E_1,E_2)\) that form a weakly admissible limited augmented bipartite graph on an \(m\times n\) grid and satisfy \(|E_1|=z(m,n)\). Thus the weak parameter is defined globally over the full feasible class, rather than relative to any single preassigned extremal \(C_4\)-free 1-edge graph.}

\blue{The incidence-family results are summarized in Table~\ref{tab:w3-incidence}. In this family the underlying 1-edge graph is fixed by the incidence construction, so the computational task reduces to optimizing over admissible 2-edge augmentations. For \(q=3\), the resulting global weak problem is small enough to solve exactly, yielding \(z_{WL}(6,4)=14\). For \(q=4,5,6,7\), however, the corresponding global exact models are presently too large to solve to optimality, and we therefore report the current certified lower bounds \(z_{WL}(10,5)\ge 29\), \(z_{WL}(15,6)\ge 46\), \(z_{WL}(21,7)\ge 65\), and \(z_{WL}(28,8)\ge 90\).}

\begin{table}[htbp]
\centering
\caption{\blue{Weak-framework computational results in the incidence-graph family of complete graphs}}
\label{tab:w3-incidence}
\begin{tabular}{|c|c|c|c|c|c|}
\hline
\(q\) & parameters & strong current total & weak \(|E_2|\) & \(z_{WL}(m,n)\) & status \\ \hline
3 & \((6,4)\) & 14 & 2 & 14 & exact \\
4 & \((10,5)\) & 26 & 9 & 29 & verified lower bound \\
5 & \((15,6)\) & \(\ge 43\) & 16 & 46 & verified lower bound \\
6 & \((21,7)\) & \(\ge 64\) & 23 & 65 & verified lower bound \\
7 & \((28,8)\) & \(\ge 88\) & 34 & 90 & verified lower bound \\ \hline
\end{tabular}
\end{table}

\blue{As in the strong framework, the fixed 1-edge subgraph in the incidence family is determined by}
\[
\blue{E_1=\{(e,v)\in \binom{V}{2}\times V: v\in e\},}
\]
\blue{so an incidence-family weak construction is determined by the displayed 2-edge family \(E_2\). We now record one verified weakly admissible family for each \(q=3,4,5,6,7\).}

For \(q=3\), the weak exact optimum is \(|E_2|=2\). Hence
\[
z_{WL}(6,4)=12+2=14.
\]
\blue{One weakly admissible family attaining this value is}
\[
\blue{E_2=\{(03,1;13,2),\ (12,3;13,0)\}.}
\]

For \(q=4\), a verified weakly admissible family with \(|E_2|=9\) gives
\[
z_{WL}(10,5)\ge 20+9=29.
\]
\blue{One such family is}
\[
\blue{\begin{aligned}
E_2=\{&(01,4;34,2),\ (02,3;12,4),\ (02,4;12,3),\ (03,1;04,2),\\
&(03,2;04,1),\ (13,0;14,2),\ (13,2;14,0),\ (23,0;24,1),\ (23,1;24,0)\}.
\end{aligned}}
\]

For \(q=5\), a verified weakly admissible family with \(|E_2|=16\) gives
\[
z_{WL}(15,6)\ge 30+16=46.
\]
\blue{One such family is}
\[
\blue{\begin{aligned}
E_2=\{&(35,0;45,1),\ (04,1;05,3),\ (14,3;15,2),\ (35,1;45,0),\ (34,0;35,2),\\
&(03,4;12,5),\ (02,4;13,5),\ (04,3;05,2),\ (01,4;23,5),\ (34,1;45,2),\\
&(24,3;25,0),\ (03,5;12,4),\ (01,5;23,4),\ (02,5;13,4),\ (24,0;25,3),\\
&(14,0;15,3)\}.
\end{aligned}}
\]

For \(q=6\), a verified weakly admissible family with \(|E_2|=23\) gives
\[
z_{WL}(21,7)\ge 42+23=65.
\]
\blue{One such family is}
\[
\blue{\begin{aligned}
E_2=\{&(06,5;12,4),\ (35,1;36,2),\ (14,3;15,2),\ (23,6;24,5),\ (14,2;15,3),\\
&(05,3;06,4),\ (25,0;34,1),\ (46,1;56,3),\ (01,3;02,4),\ (14,0;16,2),\\
&(03,1;04,5),\ (24,3;26,0),\ (02,6;12,5),\ (46,3;56,1),\ (03,6;04,1),\\
&(23,4;34,6),\ (04,2;26,3),\ (13,0;16,5),\ (13,5;16,0),\ (25,1;46,0),\\
&(13,4;24,0),\ (35,2;36,0),\ (15,6;45,2)\}.
\end{aligned}}
\]

For \(q=7\), a verified weakly admissible family with \(|E_2|=34\) gives
\[
z_{WL}(28,8)\ge 56+34=90.
\]
\blue{One such family is}
\[
\blue{\begin{aligned}
E_2=\{&(06,4;07,5),\ (23,5;24,6),\ (03,2;04,1),\ (05,1;06,2),\ (01,4;02,5),\\
&(56,2;57,1),\ (56,3;67,4),\ (13,7;14,6),\ (46,5;47,3),\ (25,4;26,3),\\
&(15,4;17,2),\ (12,7;15,6),\ (47,0;57,2),\ (45,2;47,1),\ (16,5;23,7),\\
&(25,7;34,6),\ (34,1;36,0),\ (07,3;14,5),\ (23,6;24,5),\ (36,2;37,4),\\
&(27,1;35,0),\ (05,3;07,1),\ (03,7;04,6),\ (01,7;12,5),\ (03,5;16,4),\\
&(35,6;45,0),\ (56,4;67,3),\ (02,3;17,4),\ (13,4;37,5),\ (47,2;57,0),\\
&(27,4;46,1),\ (34,0;36,1),\ (25,0;26,1),\ (17,3;35,2)\}.
\end{aligned}}
\]

\begin{remark}[\blue{Auxiliary exact literal-grid benchmarks for the weak framework}]
\blue{Besides the incidence-graph family, we also carried out exact literal-grid computations for the cases \((4,3)\), \((4,4)\), \((5,3)\), \((5,4)\), \((5,5)\), \((6,3)\), \((6,4)\), \((6,5)\), \((6,6)\), and \((7,3)\) through \((7,7)\). In these exact weak computations we impose simplicity \((S)\), allow both nondegenerate and degenerate 2-edges, apply \((W2)\) and \((W2')\) only to the nondegenerate selected 2-edges, and impose \((W3)\) on every vertex-disjoint pair of edges in which at least one edge is a 2-edge. The optimization is the global weak one: we maximize \(|E_1|+|E_2|\) over all extremal \(C_4\)-free 1-edge sets \(E_1\subseteq [m]\times[n]\) with \(|E_1|=z(m,n)\) and all compatible weakly admissible 2-edge families \(E_2\).}

\blue{For \((4,3)\), one extremal \(C_4\)-free graph is}
\[
\blue{E_1=\{(1,1),(1,2),(2,1),(2,3),(3,2),(4,2),(4,3)\},}
\]
\blue{with \(|E_1|=7=z(4,3)\). Exact computation gives \(\max |E_2|=1\), attained for example by}
\[
\blue{E_2=\{((3,3),(4,1))\}.}
\]
\blue{Hence \(z_{WL}(4,3)=8\).}

\blue{For \((4,4)\), one extremal \(C_4\)-free graph is}
\[
\blue{E_1=\{(1,1),(1,4),(2,1),(2,2),(3,1),(3,3),(4,2),(4,3),(4,4)\},}
\]
\blue{with \(|E_1|=9=z(4,4)\). Exact computation gives \(\max |E_2|=1\), attained for example by}
\[
\blue{E_2=\{((1,3),(3,2))\}.}
\]
\blue{Hence \(z_{WL}(4,4)=10\).}

\blue{For \((5,3)\), one extremal \(C_4\)-free graph is}
\[
\blue{E_1=\{(1,1),(2,2),(3,1),(3,3),(4,1),(4,2),(5,2),(5,3)\},}
\]
\blue{with \(|E_1|=8=z(5,3)\). Exact computation gives \(\max |E_2|=2\), attained for example by}
\[
\blue{E_2=\{((1,3),(3,2)),\ ((2,3),(5,1))\}.}
\]
\blue{Hence \(z_{WL}(5,3)=10\).}

\blue{For \((5,4)\), one extremal \(C_4\)-free graph is}
\[
\blue{E_1=\{(1,2),(1,3),(1,4),(2,1),(2,2),(3,4),(4,1),(4,4),(5,1),(5,3)\},}
\]
\blue{with \(|E_1|=10=z(5,4)\). Exact computation gives \(\max |E_2|=2\), attained for example by}
\[
\blue{E_2=\{((3,1),(4,2)),\ ((4,3),(5,2))\}.}
\]
\blue{Hence \(z_{WL}(5,4)=12\).}

\blue{For \((5,5)\), one extremal \(C_4\)-free graph is}
\[
\blue{E_1=\{(1,4),(1,5),(2,3),(2,5),(3,2),(3,5),(4,1),(4,5),(5,1),(5,2),(5,3),(5,4)\},}
\]
\blue{with \(|E_1|=12=z(5,5)\). Exact computation gives \(\max |E_2|=4\), attained for example by}
\[
\blue{E_2=\{((1,2),(4,3)),\ ((1,3),(4,2)),\ ((2,1),(3,4)),\ ((2,4),(3,1))\}.}
\]
\blue{Hence \(z_{WL}(5,5)=16\).}

\blue{For \((6,3)\), one extremal \(C_4\)-free graph is}
\[
\blue{E_1=\{(1,3),(2,3),(3,2),(4,1),(4,3),(5,2),(5,3),(6,1),(6,2)\},}
\]
\blue{with \(|E_1|=9=z(6,3)\). Exact computation gives \(\max |E_2|=3\), attained for example by}
\[
\blue{E_2=\{((1,1),(2,2)),\ ((1,2),(2,1)),\ ((3,1),(6,3))\}.}
\]
\blue{Hence \(z_{WL}(6,3)=12\).}

\blue{For \((6,4)\), one extremal \(C_4\)-free graph is}
\[
\blue{E_1=\{(1,2),(1,4),(2,3),(2,4),(3,1),(3,4),(4,1),(4,2),(5,2),(5,3),(6,1),(6,3)\},}
\]
\blue{with \(|E_1|=12=z(6,4)\). Exact computation gives \(\max |E_2|=2\), attained for example by}
\[
\blue{E_2=\{((2,2),(5,1)),\ ((4,3),(5,4))\}.}
\]
\blue{Hence \(z_{WL}(6,4)=14\).}

\blue{For \((6,5)\), one extremal \(C_4\)-free graph is}
\[
\blue{E_1=\{(1,3),(1,4),(2,1),(2,5),(3,1),(3,2),(3,3),(4,1),(4,4),(5,2),(5,4),(5,5),(6,3),(6,5)\},}
\]
\blue{with \(|E_1|=14=z(6,5)\). Exact computation gives \(\max |E_2|=3\), attained for example by}
\[
\blue{E_2=\{((1,1),(6,4)),\ ((2,3),(4,5)),\ ((5,1),(6,2))\}.}
\]
\blue{Hence \(z_{WL}(6,5)=17\).}

\blue{For \((6,6)\), one extremal \(C_4\)-free graph is}
\[
\blue{E_1=\{(1,2),(1,3),(2,1),(2,3),(2,5),(3,2),(3,5),(3,6),(4,3),(4,4),(4,6),(5,1),(5,2),(5,4),(6,1),(6,6)\},}
\]
\blue{with \(|E_1|=16=z(6,6)\). Exact computation gives \(\max |E_2|=5\), attained for example by}
\[
\blue{E_2=\{((1,1),(6,4)),\ ((1,6),(5,3)),\ ((2,2),(3,4)),\ ((2,4),(6,5)),\ ((4,5),(5,6))\}.}
\]
\blue{Hence \(z_{WL}(6,6)=21\).}

\blue{For \((7,3)\), one extremal \(C_4\)-free graph is}
\[
\blue{E_1=\{(1,3),(2,3),(3,2),(4,2),(5,1),(5,3),(6,2),(6,3),(7,1),(7,2)\},}
\]
\blue{with \(|E_1|=10=z(7,3)\). Exact computation gives \(\max |E_2|=4\), attained for example by}
\[
\blue{E_2=\{((1,1),(2,2)),\ ((1,2),(2,1)),\ ((3,1),(4,3)),\ ((3,3),(4,1))\}.}
\]
\blue{Hence \(z_{WL}(7,3)=14\).}

\blue{For \((7,4)\), one extremal \(C_4\)-free graph is}
\[
\blue{E_1=\{(1,2),(1,3),(2,3),(2,4),(3,1),(3,4),(4,1),(4,3),(5,2),(6,2),(6,4),(7,1),(7,2)\},}
\]
\blue{with \(|E_1|=13=z(7,4)\). Exact computation gives \(\max |E_2|=4\), attained for example by}
\[
\blue{E_2=\{((1,4),(6,1)),\ ((2,2),(5,1)),\ ((3,2),(7,3)),\ ((4,4),(5,3))\}.}
\]
\blue{Hence \(z_{WL}(7,4)=17\).}

\blue{For \((7,5)\), one extremal \(C_4\)-free graph is}
\[
\blue{E_1=\{(1,4),(1,5),(2,2),(2,4),(3,3),(3,5),(4,2),(4,5),(5,3),(5,4),(6,1),(6,2),(6,3),(7,1),(7,4)\},}
\]
\blue{with \(|E_1|=15=z(7,5)\). Exact computation gives \(\max |E_2|=6\), attained for example by}
\[
\blue{E_2=\{((1,2),(5,1)),\ ((2,3),(7,5)),\ ((2,5),(7,3)),\ ((3,1),(4,4)),\ ((3,4),(4,1)),\ ((5,2),(6,5))\}.}
\]
\blue{Hence \(z_{WL}(7,5)=21\).}

\blue{For \((7,6)\), one extremal \(C_4\)-free graph is}
\[
\blue{\begin{aligned}
E_1=\{&(1,4),(1,5),(2,2),(2,5),(2,6),(3,3),(3,6),(4,1),(4,4),\\
&(4,6),(5,1),(5,2),(6,1),(6,3),(6,5),(7,2),(7,3),(7,4)\},
\end{aligned}}
\]
\blue{with \(|E_1|=18=z(7,6)\). Exact computation gives \(\max |E_2|=6\), attained for example by}
\[
\blue{E_2=\{((1,3),(7,1)),\ ((1,6),(5,3)),\ ((2,1),(7,6)),\ ((3,2),(5,5)),\ ((3,4),(4,5)),\ ((4,2),(6,4))\}.}
\]
\blue{Hence \(z_{WL}(7,6)=24\).}

\blue{For \((7,7)\), one extremal \(C_4\)-free graph is}
\[
\blue{\begin{aligned}
E_1=\{&(1,5),(1,6),(1,7),(2,3),(2,4),(2,7),(3,2),(3,4),(3,6),(4,2),(4,3),\\
&(4,5),(5,1),(5,4),(5,5),(6,1),(6,3),(6,6),(7,1),(7,2),(7,7)\},
\end{aligned}}
\]
\blue{with \(|E_1|=21=z(7,7)\). Exact computation gives \(\max |E_2|=7\), attained for example by}
\[
\blue{E_2=\{((1,3),(7,6)),\ ((1,4),(5,2)),\ ((2,1),(3,7)),\ ((2,5),(4,6)),\ ((3,5),(6,2)),\ ((4,1),(7,4)),\ ((5,3),(6,7))\}.}
\]
\blue{Hence \(z_{WL}(7,7)=28\).}
\end{remark}

\blue{These computations should be interpreted conservatively. In the fourteen exact literal-grid cases, the stated weak values are genuine exact values of the global parameter \(z_{WL}\). In the larger incidence-family cases, the values come from explicitly verified constructions and therefore give rigorous lower bounds for the same global parameter, but not yet certified optima.}

\section{Computational comparison of the strong and weak frameworks}

\blue{With the strong and weak computational pipelines in place, we can now compare the two condition systems on common benchmark sets. The purpose of this section is not to introduce further admissibility rules, but to quantify the computational effect of replacing the strong constraints \((C2),(C3)\) by the weaker rules \((W2),(W2'),(W3)\). We first compare the exact literal-grid benchmarks, where the global weak problem is solved directly, and then compare the larger incidence-family computations, where the current weak values are reported as certified lower bounds because the corresponding global exact models remain computationally too large.}

\begin{table}[htbp]
\centering
\caption{\blue{Exact global literal-grid benchmarks for \((4,3)\) through \((7,7)\)}}
\label{tab:literal-benchmark-all}
\begin{tabular}{|c|c|c|c|c|c|}
\hline
\((m,n)\) & \(z(m,n)\) & strong \(|E_2|\) & strong total & weak \(|E_2|\) & \(z_{WL}(m,n)\) \\ \hline
\((4,3)\) & 7 & 1 & 8 & 1 & 8 \\
\((4,4)\) & 9 & 1 & 10 & 1 & 10 \\
\((5,3)\) & 8 & 1 & 9 & 2 & 10 \\
\((5,4)\) & 10 & 2 & 12 & 2 & 12 \\
\((5,5)\) & 12 & 3 & 15 & 4 & 16 \\
\((6,3)\) & 9 & 2 & 11 & 3 & 12 \\
\((6,4)\) & 12 & 2 & 14 & 2 & 14 \\
\((6,5)\) & 14 & 3 & 17 & 3 & 17 \\
\((6,6)\) & 16 & 4 & 20 & 5 & 21 \\
\((7,3)\) & 10 & 2 & 12 & 4 & 14 \\
\((7,4)\) & 13 & 3 & 16 & 4 & 17 \\
\((7,5)\) & 15 & 4 & 19 & 6 & 21 \\
\((7,6)\) & 18 & 5 & 23 & 6 & 24 \\
\((7,7)\) & 21 & 5 & 26 & 7 & 28 \\ \hline
\end{tabular}
\end{table}

\blue{Table~\ref{tab:literal-benchmark-all} records the exact literal-grid benchmark block now used throughout the paper. Across these fourteen exact test cases, the weak framework is never worse than the strong framework. Several cases improve under the global weak definition, while the remaining cases coincide with the strong exact value. Thus the exact computations still support the interpretation that replacing \((C2),(C3)\) by \((W2),(W2'),(W3)\) produces a genuine enlargement of the feasible class already at the smallest literal-grid scales.}

\begin{table}[htbp]
\centering
\caption{\blue{Exact literal-grid cases with strict improvement under the weak framework}}
\label{tab:w3-literal-improvements}
\begin{tabular}{|c|c|c|c|}
\hline
case & strong total & \(z_{WL}(m,n)\) & gain \\ \hline
\((5,3)\) & 9 & 10 & 1 \\
\((5,5)\) & 15 & 16 & 1 \\
\((6,3)\) & 11 & 12 & 1 \\
\((6,6)\) & 20 & 21 & 1 \\
\((7,3)\) & 12 & 14 & 2 \\
\((7,4)\) & 16 & 17 & 1 \\
\((7,5)\) & 19 & 21 & 2 \\
\((7,6)\) & 23 & 24 & 1 \\
\((7,7)\) & 26 & 28 & 2 \\ \hline
\end{tabular}
\end{table}

\blue{For the incidence-graph family, Table~\ref{tab:w3-incidence} shows an even clearer separation. The weak framework already beats the strong exact value at \(q=4\), improves the current strong lower bound at \(q=5\), and also improves the present verified strong lower bounds at \(q=6\) and \(q=7\), namely through \(z_{WL}(10,5)\ge 29\), \(z_{WL}(15,6)\ge 46\), \(z_{WL}(21,7)\ge 65\), and \(z_{WL}(28,8)\ge 90\). These values should again be read as rigorous lower bounds for the global weak parameter in the larger cases, since the corresponding exact optimization problems are presently too large to solve to optimality. Thus the computational evidence suggests that the weaker condition system is a meaningful enlargement of the feasible class rather than a merely cosmetic reformulation.}
\section{\blue{A lifting-based comparison for \((21,7)\)}}

We next record the lifting argument used to obtain the bound for \((21,7)\).

\begin{Thm}[Lifting bound, \cite{QCX26b}]
Suppose an admissible configuration for the incidence graph of \(K_{q+1}\) contains \(t\) 2-edges. Then there exists an admissible configuration for the incidence graph of \(K_{q+2}\) containing at least
\[
t+\left\lfloor \frac{q}{2}\right\rfloor
\]
2-edges.
\end{Thm}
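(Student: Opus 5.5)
Given an admissible family \(E_2\) for the incidence graph of \(K_{q+1}\) on \(V=\{0,\dots,q\}\), I would lift it to \(K_{q+2}\) on \(V'=V\cup\{q+1\}\). The new incidence graph \(G_1'\) contains \(G_1\) as the induced subgraph on rows \(\binom{V}{2}\) and columns \(V\). It has \(q+1\) new rows \(\{i,q+1\}\), each with 1-edges at \(i\) and \(q+1\), and one new column \(q+1\), whose 1-edges lie exactly in the new rows. The cells \((r,q+1)\) for old rows \(r\) are new available cells, so \(E_2\) stays a set of available cells in the larger grid.

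\textbf{Step 1: the old family stays admissible.} First, \((S)\) is immediate, since no old cell changes status. For \((C2)\), the two opposite cells of an old 2-edge are old cells with unchanged occupancy. For \((C3)\), the only new witnesses \((x,y)\) have \(x\) a new row or \(y=q+1\).
\begin{itemize}
\item If \(x=\{i,q+1\}\), then the row \(x\) is occupied only in columns \(i\) and \(q+1\) (plus any new 2-edges, controlled below). Then \((x,i_3)\) and \((x,i_6)\) with \(i_3\neq i_6\) old columns cannot both be occupied unless new 2-edges use row \(x\).
\item If \(y=q+1\), then the cells \((\{i_1,i_2\},q+1)\) and \((\{i_4,i_5\},q+1)\) are unoccupied before adding new edges.
\end{itemize}
So \(E_2\) is admissible in \(G_1'\).

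\textbf{Step 2: add \(\lfloor q/2\rfloor\) new 2-edges.} I would pair up disjoint elements of \(V\): \(\{a_1,b_1\},\dots,\{a_k,b_k\}\) with \(k=\lfloor q/2\rfloor\), avoiding one vertex (say \(0\)) and using \(V\setminus\{0\}\) if needed. Every newly added cell should lie either in a new row or in the new column, so that old structure is touched minimally. A natural candidate is
\[
f_j=\bigl(a_j\,q{+}1,\ b_j;\ b_j\,q{+}1,\ a_j\bigr),
\]
with halves in new rows and old columns. Its opposite cells are \((\{a_j,q+1\},a_j)\) and \((\{b_j,q+1\},b_j)\), both 1-edges, which violates \((C2)\). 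So I would instead use
\[
f_j=\bigl(a_j\,q{+}1,\ b_j;\ a_jb_j,\ q{+}1\bigr).
\]
Its opposite cells are \((\{a_j,q+1\},q+1)\), which is occupied, and \((\{a_j,b_j\},b_j)\), which is also occupied. That fails too. The plan is to pick one half in a new row at an old column \(c\notin\{a_j\}\), and the other half in an old row \(r\) at column \(q+1\) with \(a_j\notin r\). Then the opposite cell \((r,a_j)\) is available, and I keep it unoccupied by ensuring no old or new 2-edge uses it. I would choose \(r\) and \(c\) depending on \(j\) so that distinct \(j\) use distinct new rows, distinct old rows, and distinct columns. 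This explains the factor \(1/2\): each new edge consumes two vertices of \(V\) in its bookkeeping.

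\textbf{Step 3: verify \((C3)\) for the new edges.} This is the main obstacle, since the old family \(E_2\) is arbitrary and may occupy the chosen old-row cells or their \((C3)\) witnesses. Here I would use the freedom in choosing which pairs and which old rows to use. First I would attempt to use only cells in the new row \(\{a_j,q+1\}\) together with the new column. A five-cell witness then needs two occupied cells in a new row at old columns, and each new row receives at most one new half. The pair \((x,i_3),(x,i_6)\) therefore cannot both be occupied for \(x\) in another new row, which kills most witness patterns. The remaining cases, where the witness row is old, reduce to checking that column \(q+1\) has at most one occupied old-row cell per relevant configuration. I expect that careful case checking (and possibly relabeling \(V\) by an automorphism of \(K_{q+1}\) so that the chosen old rows avoid \(E_2\)'s occupied cells) completes the argument, as in \cite{QCX26b}.
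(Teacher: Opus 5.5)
The paper gives no proof of this theorem and only cites the companion paper, so your argument has to stand on its own. It does not yet: it has a genuine gap. The content of the theorem is the explicit choice of \(\lfloor q/2\rfloor\) new 2-edges and the check of \((S),(C2),(C3)\) for the enlarged family. You never fix the new edges or finish that check; Step~3 ends with ``I expect that careful case checking \dots\ completes the argument.''

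The design you settle on, with one half in an old row \(r\) at column \(q+1\), is exactly what makes the check hard, for two reasons.
\begin{itemize}
\item You misidentify an opposite cell. For \(f=(a_j\,q{+}1,\ c;\ r,\ q{+}1)\) the opposite cells are \((\{a_j,q+1\},q+1)\), which is a 1-edge, and \((r,c)\), not \((r,a_j)\). So \((C2)\) forces the \emph{old} cell \((r,c)\) to be free of both 1-edges and old 2-edges.
\item Once old-row cells in column \(q+1\) are occupied, your Step~1 argument for witnesses with \(y=q+1\) no longer holds. Suppose an old 2-edge \((r_1,c_1;r_2,c_2)\) has both \(r_1,r_2\) among your chosen old rows. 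Then the new row \(x=\{c_1,q+1\}\), if it carries a new half in column \(c_2\), gives a \((C3)\) witness with \(y=q+1\). So does a third chosen old row that is occupied at \(c_1\) and \(c_2\).
\end{itemize}
Ruling these out means proving that an \emph{arbitrary}, possibly maximal, old family leaves \(k\) suitably placed free old cells, in old rows not linked by old 2-edges. Nothing in the proposal does this. Relabeling by an automorphism of \(K_{q+1}\) adds no freedom beyond choosing \(r\) and \(c\). Separately, your Step~3 claim that \((x,i_3),(x,i_6)\) cannot both be occupied for another new row \(x\) is false here: for \(x=\{c,q+1\}\), both \((x,c)\) and \((x,q+1)\) are 1-edges.

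The missing idea is to leave the old rows untouched. Keep column \(q+1\) empty on old rows and put both halves of every new edge in new rows. Reserve the vertex \(0\) and split the \(q\) elements of \(V\setminus\{0\}\) into \(k=\lfloor q/2\rfloor\) disjoint pairs \(\{a_j,b_j\}\). Then add
\[
f_j=(a_j\,q{+}1,\ b_j;\ b_j\,q{+}1,\ 0),\qquad j=1,\dots,k .
\]
Each new row \(\{i,q+1\}\) is now occupied in at most two old columns: \(i\), plus \(b_j\) if \(i=a_j\), or \(0\) if \(i=b_j\). The checks go as follows.
\begin{itemize}
\item \((S)\) and nondegeneracy are clear.
\item \((C2)\) for \(f_j\): its opposite cells are the 1-edge \((\{b_j,q+1\},b_j)\) and the free cell \((\{a_j,q+1\},0)\).
\item \((C3)\) for \(f_j\): the rows \(\{a_j,q+1\}\) and \(\{b_j,q+1\}\) are both occupied only in columns \(b_j\) and \(q+1\), so \(y=q+1\). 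Then \((x,q+1)\) occupied forces \(x=\{i,q+1\}\) with \(i\notin\{a_j,b_j\}\). But \((x,b_j)\) occupied would need the half of row \(x\) to lie in column \(b_j\), i.e.\ \(x=\{a_j,q+1\}\), which is excluded.
\item \((C3)\) for an old nondegenerate 2-edge: a witness with \(y=q+1\) fails because old rows are empty in column \(q+1\). A witness in a new row with \(y\ne q+1\) needs three distinct occupied old columns there, but there are at most two.
\item All old cells are unchanged, so every old \((C2)\) instance and every old-row, old-column witness is exactly as before.
\end{itemize}
Column-degenerate old edges are safe too. The only new pattern, \(\{y,c\}=\{i,h\}\) for a new row \(\{i,q+1\}\) whose half lies in column \(h\), would already be a witness in the old configuration through the old row \(\{i,h\}\), which has 1-edges at \(i\) and \(h\). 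This yields \(t+\lfloor q/2\rfloor\) admissible 2-edges, and the factor \(1/2\) comes from pairing the \(q\) vertices of \(V\setminus\{0\}\).
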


Applying this theorem with \(q=5\) and \(t=13\) gives a configuration for \(K_7\) with at least
\[
13+\left\lfloor\frac52\right\rfloor=15
\]
2-edges. Since the incidence graph of \(K_7\) has \(42\) 1-edges, we obtain
\[
z_L(21,7)\ge 42+15=57.
\]
\blue{This lifting bound is weaker than the directly verified bound \(z_L(21,7)\ge 64\) obtained in Section 4, but it remains a useful general construction principle.}

For comparison, lifting the exact \(q=4\) value gives only
\[
z_L(15,6)\ge 30+\left(6+\left\lfloor\frac42\right\rfloor\right)=38,
\]
which is weaker than the bound \(z_L(15,6)\ge 43\) coming from the verified \(q=5\) family.

\section{Comparison with the \(K_{4t}\) benchmark construction}

For \(t\ge 1\), the \(K_{4t}\) construction of \cite{QCX26b} gives
\[
z_L\!\left(\binom{4t}{2},4t\right)\ge 2\binom{4t}{2}+4t^2-2t,
\]
so the asymptotic gap ratio is at least \(1/4\).

\blue{For the small cases \(q=4,5,6,7\), the exact value at \(q=4\) and the lower bounds obtained for \(q=5,6,7\) give the following ratios.}

\begin{table}[htbp]
\centering
\caption{Gap ratios \((z_L-z)/z\) in the small cases}
\label{tab:ratio}
\begin{tabular}{|c|c|c|c|c|}
\hline
\(q\) & \(n=q+1\) & \(z\) & Current value/bound for \(z_L\) & Gap ratio \\ \hline
4 & 5 & 20 & 26 & 30.0\% \\
5 & 6 & 30 & \(\ge 43\) & \(\ge 43.3\%\) \\
6 & 7 & 42 & \(\ge 64\) & \(\ge 52.4\%\) \\
7 & 8 & 56 & \(\ge 88\) & \(\ge 57.1\%\) \\ \hline
\end{tabular}
\end{table}

\blue{Therefore these small cases already lie above the asymptotic \(25\%\) benchmark supplied by the \(K_{4t}\) construction. Whether this remains true asymptotically is an open problem.}

\section{Conclusion}

\blue{We studied limited augmented Zarankiewicz numbers in the incidence-graph family through optimization models and certified computations under both the original framework \((C1),(S),(C2),(C3)\) and the weak framework \((S),(W2),(W2'),(W3)\). For the original framework, a direct 0--1 ILP together with constructive search and exact final admissibility verification yields the exact values \(z_L(6,4)=14\) and \(z_L(10,5)=26\), as well as the certified lower bounds \(z_L(15,6)\ge 43\), \(z_L(21,7)\ge 64\), and \(z_L(28,8)\ge 88\).}
\[
\blue{z_L(6,4)=14,\qquad z_L(10,5)=26,\qquad z_L(15,6)\ge 43,}
\]
\[
\blue{z_L(21,7)\ge 64,\qquad z_L(28,8)\ge 88.}
\]

\blue{For the global weak parameter \(z_{WL}\), the fourteen literal-grid benchmarks from \((4,3)\) through \((7,7)\) can be solved exactly, providing a common exact test bed on which the weak framework is never worse than the strong one and is strictly better in several cases. In the larger incidence-family cases, the corresponding global exact models are presently beyond current computational reach, and the paper therefore reports the certified lower bounds \(z_{WL}(6,4)=14\), \(z_{WL}(10,5)\ge 29\), \(z_{WL}(15,6)\ge 46\), \(z_{WL}(21,7)\ge 65\), and \(z_{WL}(28,8)\ge 90\). These computations already show a systematic advantage of the weak framework from \(q=4\) onward in the incidence family.}

\blue{Overall, the paper shows that the incidence-graph family supports a useful optimization testbed for studying admissible 2-edge augmentation: exact models are effective on small instances, while search combined with exact verification yields rigorous information on larger instances. The resulting values improve the corresponding classical Zarankiewicz numbers and therefore strengthen available lower bounds for \(\operatorname{BSR}(m,n)\) in this setting.}

\blue{We conclude with the following three open problems.}
\begin{enumerate}
  \item Determine the exact values of \(z_L(15,6)\), \(z_L(21,7)\), and \(z_L(28,8)\) in the incidence-graph family.
  \item Determine more precisely the role of degenerate 2-edges, in the row/column sense of \cite{QCX26a}, in optimal constructions.
  \item Compare the incidence-graph family with other extremal \(C_4\)-free families, and determine whether the asymptotic gap ratio can exceed \(1/4\).
\end{enumerate}

\bigskip
	
	\noindent\textbf{Acknowledgments}
	This work was partially supported by the Jiangsu Provincial Scientific Research Center of Applied Mathematics (Grant No. BK20233002).
	
	\medskip
	
	\noindent\textbf{Data availability}
	No datasets were used in this study.
	
	\medskip
	
	\noindent\textbf{Conflict of interest} The authors declare no conflict of interest.

\end{document}